\def\C{\protect\operatorname{Conf}}
\def\UC{\protect\operatorname{UConf}}
\def\UD{\emph{UD\hspace{.2mm}}}
\def\UDT{\emph{UDT\hspace{.2mm}}}
\newtheorem{proposition}{Proposition}[section]
\newtheorem{corollary}[proposition]{Corollary}
\newtheorem{definition}[proposition]{Definition}
\newtheorem{theorem}[proposition]{Theorem}
\newtheorem{remark}[proposition]{Remark}
\newtheorem{example}[proposition]{Example}
\newtheorem{lemma}[proposition]{Lemma}
\begin{document}

\title{Borsuk-Ulam property for graphs}

\author{Daciberg Lima Gon\c{c}alves and Jes\'us Gonz\'alez}

\date{\today}

\maketitle

\begin{abstract}
For finite connected graphs $\Gamma$ and $G$, with $\Gamma$ admitting a free involution $\tau$, we characterize the based homotopy classes $\alpha\in[\Gamma,G]$ for which the Borsuk-Ulam property holds in the sense of Gon\c{c}alves, Guaschi and Casteluber-Laass, i.e., the homotopy classes $\alpha$ so that each of its representatives $f\in\alpha$ satisfies $f(x) = f(\tau\cdot x)$ for some $x\in\Gamma$. This is attained through a graph-braid-group perspective aided by the use of discrete Morse theory.
\end{abstract}

{\small 2020 Mathematics Subject Classification: Primary: 55M20, 57Q70. Secondary: 20F36, 55R80, 57S25.}

{\small Keywords and phrases: Free involutions, Borsuk-Ulam property, graph braid groups, discrete Morse theory.}

\section{Introduction and main result}\label{intro}
In its classical formulation, the Borsuk-Ulam Theorem asserts that, for any continuous map
\begin{equation}\label{classicaln}
f\colon S^n\to \mathbb{R}^n,
\end{equation}
there is a point $x\in S^n$ so that both $x$ and its antipodal $-x$ have the same image under $f$. Such a phenomenon has been intensively studied in the last 15 years within generalized contexts, namely, for maps $f\colon M\to N$ between spaces $M$ and $N$, where $M$ admits a free involution. For instance, the case where $M$ ranges over surfaces or suitable families of 3-manifolds is now reasonably well understood \cite{MR3614297,BGH1,MRBGH,BauHaGoZv,MR3619753,MR4431413,MR2209795,MR2840097}. The case where $N$ has non-trivial homotopy information leads to a more refined problem, as the Borsuk-Ulam question can then have different answers for different homotopy classes\footnote{Unless otherwise noted, spaces are assumed to come equipped with base points which must be preserved by maps between spaces. Likewise, homotopy classes are meant in the based sense.} in $[M,N]$.

\begin{definition}[\cite{MR3947929}]
Assume $M$ admits a free involution $\tau$. We say that the Borsuk-Ulam property holds for a homotopy class $\alpha\in[M,N]$ if for every representative $f\in\alpha$ there is a point $x\in M$ such that $f(x)=f(\tau\cdot x)$. If the above condition holds for all homotopy classes in $[M,N]$, we say that the triple $(M,\tau,N)$ satisfies the Borsuk-Ulam property.
\end{definition}

We give a complete answer to the Borsuk-Ulam problem in the case where both $M$ and $N$ are 1-dimensional compact connected objects. We will thus focus on maps $f\colon \Gamma \to G$ between finite connected graphs $\Gamma$ and~$G$, addressing the Borsuk-Ulam property with respect to some fixed free involution $\tau$ on $\Gamma$.

\begin{remark}\label{notaninterval}{\em
In the classical situation~(\ref{classicaln}) with $n=1$, the circle plays no essential role. Indeed, by considering the differences $f(x)-f(\tau\cdot x)$, it can be seen that any map $f\colon \Gamma\to \mathbb{R}$ satisfies the Borsuk-Ulam property. Such a pleasant situation changes drastically when $\mathbb{R}$ (or an interval, for that matter) is replaced by a more general graph~$G$ which, in what follows, will be assumed not to be homeomorphic to an interval. In particular the configuration spaces $\C_2(G)$ and $\UC_2(G)$ consisting respectively of pairs $(x_1,x_2)$ and of subsets $\{x_1,x_2\}$ with $x_1\neq x_2$ are both connected.
}\end{remark}

The Borsuk-Ulam property for $(\Gamma,\tau,G)$ as above is described next. 
\begin{theorem}\label{maintheorem}
If $G$ is not homeomorphic to a circle or to an interval, then the Borsuk-Ulam property fails for all homotopy classes in $[\Gamma,G]$, i.e., for every $\alpha\in[\Gamma,G]$ there is a representative $f\in\alpha$ satisfying $f(x)\neq f(\tau\cdot x)$ for all $x\in\Gamma$.
\end{theorem}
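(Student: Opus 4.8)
The plan is to recast the absence of coincidences as an equivariant lifting problem, and then to solve that problem using the two-strand graph braid group $B_2(G):=\pi_1(\UC_2(G))$. Write $\Sigma:=\Gamma/\tau$ for the quotient graph and $\bar\omega\in H^1(\Sigma;\mathbb{Z}/2)=\Hom(\pi_1\Sigma,\mathbb{Z}/2)$ for the (non-zero) class of the double cover $\Gamma\to\Sigma$, so $\pi_1\Gamma$ sits in $\pi_1\Sigma$ as the index-two subgroup $\ker\bar\omega$. Let $\operatorname{pr}_1,\operatorname{pr}_2\colon\C_2(G)\to G$ be the coordinate projections and $\operatorname{sw}$ the free swap involution of $\C_2(G)$, with quotient $\UC_2(G)$; by Remark~\ref{notaninterval} this double cover is connected, so it yields an epimorphism $\varepsilon\colon B_2(G)\to\mathbb{Z}/2$ with $\ker\varepsilon=P_2(G):=\pi_1(\C_2(G))$, and note $\operatorname{pr}_1\circ\operatorname{sw}=\operatorname{pr}_2$. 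The point is that $f\colon\Gamma\to G$ satisfies $f(x)\neq f(\tau x)$ for all $x$ exactly when $\phi:=(f,f\circ\tau)$ is a map $\Gamma\to\C_2(G)$ with $\phi\circ\tau=\operatorname{sw}\circ\phi$ and $\operatorname{pr}_1\circ\phi=f$; and such a $\phi$ is the same datum as a map $\Sigma\to\UC_2(G)$ pulling the cover $\C_2(G)\to\UC_2(G)$ back to $\Gamma\to\Sigma$, together with a choice of lift of the base point. Since $\Gamma$, $\Sigma$, $G$, $\C_2(G)$, $\UC_2(G)$ are all aspherical, based homotopy classes of maps between them are determined by $\pi_1$, so the Borsuk-Ulam property fails for $\alpha\in[\Gamma,G]$ (with induced $\alpha_{*}\colon\pi_1\Gamma\to\pi_1 G$) if and only if there is a homomorphism $\Phi\colon\pi_1\Sigma\to B_2(G)$ with $\varepsilon\circ\Phi=\bar\omega$ and $(\operatorname{pr}_1)_{*}\circ\Phi|_{\pi_1\Gamma}=\alpha_{*}$ (for suitable base points, $\ast_G$ being the first coordinate of a chosen base point $(\ast_G,q)$ of $\C_2(G)$ with $q\neq\ast_G$).

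Next I would unwind this lifting condition. Fix a free basis $y_0,\dots,y_{r-1}$ of $\pi_1\Sigma$ with $\bar\omega(y_0)=1$ and $\bar\omega(y_i)=0$ for $i\ge 1$; then Reidemeister-Schreier gives a free basis $a_0:=y_0^{2}$, $a_i:=y_i$, $c_i:=y_0y_iy_0^{-1}$ ($1\le i\le r-1$) of $\pi_1\Gamma$. A homomorphism $\Phi$ as required is the same as a choice of $s:=\Phi(y_0)$ with $\varepsilon(s)=1$ and of $u_i:=\Phi(y_i)\in P_2(G)$; lifting $s$ to a path $\widetilde s$ in $\C_2(G)$ from $(\ast_G,q)$ to $(q,\ast_G)$ and using $\operatorname{pr}_1\circ\operatorname{sw}=\operatorname{pr}_2$ to evaluate $(\operatorname{pr}_1)_{*}$ on $\Phi(a_0)=s^{2}$ and on $\Phi(c_i)=su_is^{-1}$, the condition $(\operatorname{pr}_1)_{*}\circ\Phi|_{\pi_1\Gamma}=\alpha_{*}$ becomes: (i) the loop $\operatorname{pr}_1(\widetilde s)$ followed by $\operatorname{pr}_2(\widetilde s)$ represents $\alpha_{*}(a_0)$, and (ii) each $u_i\in\pi_1(\C_2(G),(\ast_G,q))$ has $(\operatorname{pr}_1)_{*}(u_i)=\alpha_{*}(a_i)$ and $(\operatorname{pr}_2)_{*}(u_i)$ equal to a fixed conjugate of $\alpha_{*}(c_i)$ (the conjugating element coming from the path $\operatorname{pr}_1(\widetilde s)$). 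Both (i) and (ii) can be arranged once one knows: (a) $\C_2(G)$ is connected (so that some $\widetilde s$ exists; this is Remark~\ref{notaninterval}), and (b) the homomorphism
\[
\varrho:=\big((\operatorname{pr}_1)_{*},(\operatorname{pr}_2)_{*}\big)\colon\pi_1(\C_2(G))\longrightarrow\pi_1(G)\times\pi_1(G)
\]
is surjective — indeed, pre-composing $\widetilde s$ with a loop that $\varrho$ sends into $\pi_1(G)\times\{1\}$ corrects (i), and surjectivity of $\varrho$ then provides the $u_i$ in (ii). When $G$ is a tree, $\pi_1 G=1$, so $[\Gamma,G]$ is a single class and only (a) is needed; the content of the theorem is the case $b_1(G)\ge 1$.

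The crux is statement (b), and establishing it uniformly for every graph that is neither a circle nor an interval is the step I expect to require the most work. Since $b_1(G)\ge 1$, fix a spanning tree $T$ and let the non-tree edges furnish free bases $\{x_j\}$ and $\{x'_j\}$ of $\pi_1(G,\ast_G)$ and $\pi_1(G,q)$, with $q$ chosen in the interior of a non-tree edge (or, if $b_1(G)=1$, on a branch attached to the cycle, which exists because $G\neq S^1$). It then suffices to realise each generator $(x_j,1)$ and $(1,x'_j)$ of $\pi_1(G)\times\pi_1(G)$ by a two-strand motion in which one strand runs once around the corresponding loop while the other stays put and only \emph{dodges}: whenever the moving strand is about to traverse the edge carrying the resting strand, the latter steps aside, onto whichever side the moving strand is heading towards, and returns once that edge has been vacated. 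The hypothesis that $G$ is not a circle is precisely what guarantees a parking spot for such a dodge (a branch, or a second independent cycle), and it is precisely where the argument must fail for $G=S^1$ — where $\varrho$ has image the diagonal — and for an interval — where $\C_2(G)$ is disconnected. Turning this dodging picture into a rigorous argument is most cleanly carried out in Abrams' \UD\ cube-complex model of $\UC_2(G)$ equipped with the Farley-Sabalka discrete Morse function, which renders the motions as manipulations of Morse $1$-cells and yields the surjectivity of $\varrho$ straight from a presentation. By contrast, the reformulation and the reduction carried out above are formal, once the dictionary among coincidence-free maps, equivariant maps into $\C_2(G)$, and homomorphisms into $B_2(G)$ is in place.
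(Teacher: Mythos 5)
Your reduction is sound and runs parallel to the paper's own framework: the equivalence between coincidence-free representatives of $\alpha$ and homomorphisms $\Phi\colon\pi_1(\Gamma/\tau)\to B_2(G)$ compatible with the two $\mathbb{Z}_2$-valued epimorphisms and with $(p_1)_\#$ is exactly Theorem~\ref{BUviatrenzas}, your Reidemeister--Schreier basis is Lemma~\ref{generatorsviaranks}, and your observation that the theorem follows from (a) connectivity of $\C_2(G)$ together with (b) surjectivity of $\varrho=((p_1)_\#,(p_2)_\#)\colon P_2(G)\to\pi_1(G)\times\pi_1(G)$ is a correct and rather clean repackaging of what the paper achieves by explicit word manipulation (your correction of $\Phi(y_0)$ by an element of $\varrho^{-1}(\pi_1(G)\times\{1\})$, and the free choice of the $u_i$, are fine). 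The genuine gap is that (b), which you yourself identify as the crux, is never proved: the ``dodging'' picture is a heuristic, and the closing assertion that Abrams' model with the Farley--Sabalka field ``yields the surjectivity of $\varrho$ straight from a presentation'' is precisely the statement that requires the work --- it is the technical heart of the paper, not a formality one can defer.

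Concretely, what is missing is the construction, uniform over every $G$ that is neither a circle nor an interval, of pure braids with prescribed pairs of coordinate projections. In the paper this is the content of Propositions~\ref{inclusion}, \ref{thetaclasificante}, \ref{proyeccion1} and Corollary~\ref{conjugacion}, applied to carefully chosen critical $1$-cells: the key mechanism is conjugation by the swap element $\sigma=\{a,(b,c)\}$, producing $\lambda_i'=\sigma\lambda_i\sigma^{-1}$ with $p_1(\lambda_i)=1$ while $p_1(\lambda_i')$ runs through generators of $\pi_1(G)$, as recorded in~(\ref{propertiesnolinear}) and~(\ref{propertieslinear}). Moreover there is a genuine case distinction your sketch does not see: when the chosen spanning tree is linear, the available swap satisfies $p_1(\sigma^2)=z_1\neq1$, which forces a different connecting cell $(x_1+1,(x_1,y_1))$ and the change of basis $t_1=z_1$, $t_i=z_iz_1^{-1}$ before the analogue of your correction argument can be run. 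Your informal dodging rule (``step aside onto whichever side the moving strand is heading towards and return once the edge is vacated'') is not by itself an argument: one must check that the resting strand can always complete a null-homotopic excursion without being trapped against a dead-end segment or forced to pick up winding, for every basepoint configuration, including graphs with $b_1(G)=1$; in simple examples (a circle with a pendant arc, basepoints on the arc) the successful motion already requires an iterated, coordinated dodge, and certifying existence in general is exactly the combinatorics that the Farley--Sabalka calculus of Section~\ref{sectiononDMT} encodes. Until that surjectivity is actually established, your proposal proves the theorem only modulo its main technical ingredient.
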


When $G$ is a circle, the behavior of the Borsuk-Ulam property sits in between Remark~\ref{notaninterval} and Theorem~\ref{maintheorem}. The explicit answer, given in Theorem~\ref{secondmain} below, generalizes~\cite[Proposition~6]{MR3947929} and depends on the Euler characteristic $\chi(\Gamma)$. The latter number is even, in view of the free involution~$\tau$, and at most 0, since $\Gamma$ is connected. Say $\chi(\Gamma)=-2m$ with $m\geq0$. 

\begin{theorem}\label{secondmain}
If $G$ is homeomorphic to a circle $S^1$, then the Borsuk-Ulam property holds for most of the homotopy classes in $[\Gamma,S^1]$. Explicitly, under a certain identification of $[\Gamma,S^1]$ with $\mathbb{Z}^{2m+1}$, the homotopy classes of maps $\Gamma\to S^1$ for which the Borsuk-Ulam property fails are precisely the $(2m+1)$-tuples $(p,p_1,p_1,p_2,p_2,\ldots,p_m,p_m)$ with $p$ odd (and  $p_1,\ldots,p_m$ arbitrary).
\end{theorem}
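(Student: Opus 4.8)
The plan is to convert the Borsuk--Ulam question into a lifting problem for the double cover $\C_2(S^1)\to\UC_2(S^1)$ and then settle it by linear algebra over $H^1$.

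First I would reduce failure of the BUP to the existence of an equivariant map. For $f\colon\Gamma\to S^1$ set $D_f\colon\Gamma\to S^1\times S^1$, $D_f(x)=(f(x),f(\tau\cdot x))$; this is equivariant for $\tau$ and the coordinate swap, and the first projection $\operatorname{pr}_1$ satisfies $\operatorname{pr}_1\circ D_f=f$. Hence $\alpha=[f]$ fails the BUP iff some representative has $D_f(\Gamma)\subseteq\C_2(S^1)$, equivalently iff $\alpha=[\operatorname{pr}_1\circ g]$ for some $\tau$-equivariant $g\colon\Gamma\to\C_2(S^1)$ (swap action on the target), since for such $g$ the map $\operatorname{pr}_1\circ g$ is a representative of its class with $\operatorname{pr}_1(g(\tau\cdot x))=\operatorname{pr}_2(g(x))\neq\operatorname{pr}_1(g(x))$. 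Here I use the classical facts that $\C_2(S^1)$ is homotopy equivalent to a circle on which the swap becomes a fixed-point-free --- hence, up to homotopy, antipodal --- involution, so $\UC_2(S^1)\simeq S^1$ and $q\colon\C_2(S^1)\to\UC_2(S^1)$ is the degree-$2$ self-map up to homotopy, while $\operatorname{pr}_1\colon\C_2(S^1)\to S^1$ induces an isomorphism on $H^1(-;\mathbb Z)$.

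Next I would pass to the quotient. Let $\pi\colon\Gamma\to\Gamma':=\Gamma/\tau$ be the double cover and $w\in H^1(\Gamma';\mathbb Z/2)$ its (nonzero) classifying class; since $S^1$ is a $K(\mathbb Z,1)$ we identify $[\Gamma,S^1]\cong H^1(\Gamma;\mathbb Z)$ and $[\Gamma',S^1]\cong H^1(\Gamma';\mathbb Z)$. As $\Gamma$ and $\C_2(S^1)$ carry free involutions, $\tau$-equivariant maps $g\colon\Gamma\to\C_2(S^1)$ correspond to maps $\bar g\colon\Gamma'\to\UC_2(S^1)\simeq S^1$ whose pullback of $q$ is $\pi$, i.e.\ (writing $u$ for the generator of $H^1(\UC_2(S^1);\mathbb Z/2)$) with $\bar g^*u=w$; equivalently, with $v:=\bar g^*\beta$ satisfying $v\equiv w\pmod 2$, where $\beta$ generates $H^1(\UC_2(S^1);\mathbb Z)$. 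Combining $q^*\beta=\pm2\cdot(\text{generator of }H^1(\C_2(S^1);\mathbb Z))$ with the fact that $\operatorname{pr}_1^*$ is an isomorphism, the class realized by $\operatorname{pr}_1\circ g$ is $\tfrac12\pi^*v$, which is well defined since $v\equiv w$ forces $\pi^*v\equiv\pi^*w=0$ modulo $2$ in the torsion-free group $H^1(\Gamma;\mathbb Z)$ (the pullback of a covering to its total space being trivial). Every $v\equiv w$ is realized, so the set of BUP-failing classes is
\[
\mathcal B=\Bigl\{\,\tfrac12\pi^*v \;:\; v\in H^1(\Gamma';\mathbb Z),\ v\equiv w\pmod2\,\Bigr\},
\]
the residual sign being harmless as $v\mapsto-v$ preserves the congruence.

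Then I would make $\mathcal B$ explicit with a lifted spanning tree. Pick a spanning tree $T'\subseteq\Gamma'$; its $m+1$ complementary edges give cycles $\gamma_0,\dots,\gamma_m$, a basis of $H_1(\Gamma';\mathbb Z)$, dual to a basis $b_0,\dots,b_m$ of $H^1(\Gamma';\mathbb Z)$; after a $\mathrm{GL}_{m+1}(\mathbb Z)$ change of basis (using $w\neq0$) I may assume $w(\gamma_0)=1$ and $w(\gamma_i)=0$ for $i\geq1$, so $\bar b_0=w$. The tree $T'$ lifts to two copies $T_1\sqcup T_2$ in $\Gamma$; the edges $a_i$ ($i\geq1$) lift to in-sheet edges $a_i^{(1)},a_i^{(2)}$ and $a_0$ to two sheet-switching edges. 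Completing $T_1\sqcup T_2$ by one lift of $a_0$ to a spanning tree of $\Gamma$ yields a basis $\gamma_1^{(1)},\gamma_1^{(2)},\dots,\gamma_m^{(1)},\gamma_m^{(2)},\delta$ of $H_1(\Gamma;\mathbb Z)$ with $\pi_*\gamma_i^{(k)}=\gamma_i$ and $\pi_*\delta=2\gamma_0$; dualizing gives a basis $c_1^{(1)},c_1^{(2)},\dots,c_m^{(1)},c_m^{(2)},d$ of $H^1(\Gamma;\mathbb Z)$ with $\pi^*b_i=c_i^{(1)}+c_i^{(2)}$ for $i\geq1$ and $\pi^*b_0=2d$. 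Writing the generic $v\equiv w$ as $v=p\,b_0+2\sum_{i\geq1}p_ib_i$ with $p$ odd, we obtain $\tfrac12\pi^*v=p\,d+\sum_{i\geq1}p_i(c_i^{(1)}+c_i^{(2)})$. Hence, under the identification $[\Gamma,S^1]\cong\mathbb Z^{2m+1}$ given by the ordered basis $(d,c_1^{(1)},c_1^{(2)},\dots,c_m^{(1)},c_m^{(2)})$, the BUP-failing classes are exactly the tuples $(p,p_1,p_1,\dots,p_m,p_m)$ with $p$ odd.

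I expect the main obstacle to be the bookkeeping in this last step --- tracking the lifts carefully enough to verify $\pi_*\delta=2\gamma_0$ and that $\{d,\pi^*b_1,\dots,\pi^*b_m\}$ extends to a $\mathbb Z$-basis of $H^1(\Gamma;\mathbb Z)$ --- together with the base-point and orientation normalizations tacitly invoked in the first two steps. As a consistency check, for $m=0$ (so $\Gamma\simeq S^1$) the statement reduces to ``the BUP fails exactly in odd degrees'', recovering \cite[Proposition~6]{MR3947929}. Discrete Morse theory, which is the workhorse for the general target in Theorem~\ref{maintheorem}, is needed here only to supply convenient equivariant cell models of $\Gamma$.
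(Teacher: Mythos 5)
Your argument is correct (modulo the lifted-basis bookkeeping you yourself flag, which does go through), and it reaches the same parametrization of the failing classes as the paper, but it is packaged differently. The paper feeds the circle case into its general braid-group criterion: by Theorem~\ref{BUviatrenzas} the failing classes are those fitting into diagram~(\ref{technique}); the right-hand column is identified as in~(\ref{babyexample}) (namely $P_2(S^1)=\mathbb{Z}$ sitting as the index-two subgroup of $B_2(S^1)=\mathbb{Z}$, with $(p_1)_{\#}$ the identity and $\theta_2$ reduction mod $2$); and the liftings $\psi$ of $\theta_1$ are enumerated via the adapted free-group bases of Lemma~\ref{generatorsviaranks}, proved by Reidemeister--Schreier. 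Your first step (failure of the Borsuk--Ulam property $\Leftrightarrow$ existence of an equivariant map $\Gamma\to\C_2(S^1)$ projecting to a representative) is exactly the geometric content behind Theorem~\ref{BUviatrenzas}, and since the target is $S^1=K(\mathbb{Z},1)$ you may abelianize everything: the braid groups become $H^1$, the inclusion $P_2\subset B_2$ becomes the degree-$2$ covering $\C_2(S^1)\to\UC_2(S^1)$, and your lifted-spanning-tree basis with $\pi_*\delta=2\gamma_0$, $\pi^*b_i=c_i^{(1)}+c_i^{(2)}$, $\pi^*b_0=2d$ is the homological counterpart of Lemma~\ref{generatorsviaranks} ($a=c^2$, $a_i=c_i$, $a_i'=cc_ic^{-1}$). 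What your route buys is a self-contained, purely cohomological proof of Theorem~\ref{secondmain}, needing neither the discrete Morse machinery nor the general criterion; what the paper's route buys is uniformity, since Theorem~\ref{BUviatrenzas} and Lemma~\ref{generatorsviaranks} are needed anyway for Theorem~\ref{maintheorem}, where the graph braid groups are non-abelian and cannot be traded for $H^1$. Two small points to tidy: after your $\mathrm{GL}_{m+1}(\mathbb{Z})$ change of basis the classes $\gamma_i$, $i\geq1$, need no longer be cycles of single complementary edges, so the in-sheet lifting of the edges $a_i$ should be phrased for the original edge-cycles (adjusting those with $w=1$ by $\gamma_0$ after lifting) or the basis change postponed; and the theorem is stated for based classes, so you should invoke, as the paper does, that based and free classes in $[\Gamma,S^1]$ coincide because $\pi_1(S^1)$ is abelian. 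Also note the identification of $[\Gamma,S^1]$ with $\mathbb{Z}^{2m+1}$ is part of the statement (``a certain identification''), and your basis is of the same adapted type as the paper's, so the conclusions agree.
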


Observe that Theorems~\ref{maintheorem} and~\ref{secondmain} can be stated replacing based homotopy classes by free homotopy classes. This is clear in the case of Theorem \ref{maintheorem}, while the case of Theorem~\ref{secondmain} follows from the fact that $\pi_1(S^1)$ is abelian, so that based homotopy classes and free homotopy classes coincide.

As in \cite{MR3993193,MR2639841,MR01,MR3947929,MR4235703}, we study the Borsuk-Ulam property for graphs through a sharp algebraic model in terms of braid groups. In our case (graphs), the critical information comes from the detailed control of the topological combinatorics associated to graph configuration spaces (both in the ordered and unordered contexts) provided by Farley-Sabalka's discrete gradient field on Abrams' homotopy model. All needed details are reviewed in Section~\ref{sectiononDMT}. Section~\ref{sectiononBUP} is devoted to the proof of Theorems~\ref{maintheorem} and~\ref{secondmain}.

\smallskip\noindent {\bf Acknowledgements:} The first author was partially supported by the FAPESP ``Projeto Tem\'atico-FAPESP Topologia Alg\'ebrica, Geom\'etrica e Diferencial'' 2016/24707-4 (S\~ao Paulo-Brazil). The potentiality of this paper was first realized by the authors during the workshop ``Topological Complexity and Motion Planning 22w5182'', held in Oaxaca, Mexico, from May 29 to June 3, 2022. Both authors are grateful to CMO-BIRS for a rich and stimulating workshop environment. The main part of this work was done during the visit of the first author to the Mathematics Department of Cinvestav, from September 7 to September 23, 2022. The first author is greatly thankful for the invitation and the hospitality of the institute.

\section{Graph braid groups via discrete Morse theory}\label{sectiononDMT}
We start by collecting the ingredients we need about Forman's discrete Morse theory and Farley-Sabalka's gradient field on Abrams' discrete model for (ordered and unordered) graph configuration spaces. For details, the reader is referred to (\cite{MR2701024,jorgetereyo,MR2171804,MR1358614,MR1926850,MR2833585}.

\subsection{Discrete Morse theory}\label{subsectionDMT}
Let $X$ be a connected finite regular CW complex with cell poset $\mathcal{F}$ partially ordered by inclusion\footnote{Cells are meant in the closed sense.}. For a cell $a\in\mathcal{F}$, we use $a^{(p)}$ as a shorthand of $\dim(a)=p$. The Hasse diagram of $\mathcal{F}$, $H_\mathcal{F}$, is thought of as a directed graph with arrows $a^{(p+1)}\searrow b^{(p)}$ oriented from the higher dimensional cell to the lower dimensional cell. Let $W$ be a partial matching on $H_\mathcal{F}$, i.e., a directed subgraph of $H_\mathcal{F}$ all whose vertices have degree 1. The modified Hasse diagram $H_\mathcal{F}(W)$ is obtained from $H_\mathcal{F}$ by reversing all arrows of $W$. A reversed edge is denoted as $b^{(p)}\nearrow a^{(p+1)}$, in which case $a$ is said to be collapsible and $b$ is said to be redundant. A path $\lambda$ in $H_\mathcal{F}(W)$ is a chain of up-going and down-going arrows $$a_0\nearrow b_1\searrow a_1\nearrow\cdots\nearrow b_k\searrow a_k.$$ The path $\lambda$ is said to be a cycle when $a_0=a_k$. If there are no cycles, $W$ is called a gradient field and, in such a case, cells of $X$ that are neither redundant nor collapsible are said to be critical. In what follows we assume that $W$ is a gradient field on $X$.

The subgraph of the one skeleton $X^{(1)}$ consisting of all vertices and of all collapsible edges forms a maximal forest $F_X$ with as many components as there are critical 0-cells in $X$. Add critical edges as needed in order to form a maximal tree $T_X$. Fix a vertex $v_0\in X^{(0)}$ as base point. Collapsing $T_X$ to $v_0$ yields a generating set $\{\beta_e\}_{e}$ for the fundamental group $\pi_1(X;v_0)$, where $e$ runs over the set of (arbitrarily oriented) critical 1-cells of $X$ that are not part of $T_X$. Explicitly, for each vertex $u\in X^{(0)}$, let $\beta_u$ be the unique path in $T_X$ determined by the ordered sequence of non repeating edges connecting $v_0$ to~$u$. Then, for a critical 1-cell $e$ from $u_1$ to $u_2$ that is not part of $T_X$, the loop
\begin{equation}\label{representingloop}
\beta_{u_1}\star e\star\beta_{u_2}
\end{equation}
represents the homotopy class~$\beta_e\in\pi_1(X;v_0)$, which will simply be denoted as $e\in\pi_1(X;v_0)$ ---the context clarifies whether we refer to the actual cell or to the corresponding homotopy class. Farley and Sabalka go further describing a set of relations among the homotopy generators that yield a presentation of $\pi_1(X,v_0)$. The relations depend on the critical 2-cells and the redundant 1-cells. We omit the details as we will not have need to use the relations.

\subsection{Farley-Sabalka gradient field on Abrams model}\label{subsectionFSA}
Let $G$ be a finite connected graph. By inserting a few non-essential vertices, we can assume $G$ is simplicial, i.e., that $G$ contains no loops nor double edges. Let $\C_2(G)$ denote the ordered configuration space of pairs $(x,y)\in G^2$ with $x\neq y$, and let $\UC_2(G)$ denote the orbit space by the involution $(x,y)\mapsto(y,x)$. Abrams homotopy model $D_2(G)$ for $\C_2(G)$ is the subcomplex of $G\times G$ whose cells are the ordered pairs $c=(c_1,c_2)$ of cells\footnote{A cell of $G$ is either a vertex or a (closed) edge.} of $G$ with $c_1\cap c_2=\varnothing$. The orbit complex $\UD_2(G)$ resulting from the involution $(c_1,c_2)\mapsto(c_2,c_1)$ is the corresponding homotopy model for $\UC_2(G)$. Thus, cells of $\UD_2(G)$ are sets $c=\{c_1,c_2\}$ of  disjoint cells $c_i$ of $G$. Both in the ordered and unordered settings:
\begin{itemize}
\item the cells $c_1$ and $c_2$ are called the ingredients of $c$;
\item the dimension of $c$ is the sum of the dimensions of $c_1$ and $c_2$;
\item the orientation of a 1-dimensional cell with vertex ingredient $v$ and edge ingredient $(v_1,v_2)$ (so $u\neq v_1<v_2\neq u$) will be inherited from that of $(v_1,v_2)$. For instance, the ordered 1-cell $((v_1,v_2),u)$ is oriented from $(v_1,u)$ to $(v_2,u)$.
\end{itemize}

The construction of Farley-Sabalka gradient field on $D_2(G)$ and of its quotient on $\UD_2(G)$ require some preliminary notation. Start by choosing a maximal tree $T$ of $G$. Edges of $G$ outside $T$ are called \emph{deleted edges}. Fix a planar embedding of $T$ and a root of $T$ (i.e., a vertex of degree 1 in~$T$), which is denoted by~$0$. The rest of the vertices of $G$ are consecutively numbered $1,2,\ldots$ as we first find them in the walk along $T$ that starts at $0$ and that takes the leftmost branch at any given intersection, turning around when a vertex of degree one is reached. An edge $e$ bounded by vertices $u$ and $v$ with $u<v$ is denoted by $e=(u,v)$, and is oriented from $u$ to $v$. Under such conditions we also write $v=\sigma(e)$, the \emph{source} of $e$, and $u=\tau(e)$, the \emph{target} of $e$. The source-target notation is compatible with the fact that, by collapsing $T$ down to its root $0$, we can think of $\pi_1(G,0)$ as the free group generated by the deleted edges (each with the vertex-ordering orientation). Note that if $(u,v)$ is non-deleted, $u$ is determined as the vertex adjacent to $v$ in $T$ that is located in the $T$-path leading from $v$ back to $0$, so we can safely write $e_v=(u,v)$. In particular, non-deleted edges $e_v$'s will be ordered according to the order of the corresponding $v$'s.

Let $c=(c_1,c_2)$ (or $c=\{c_1,c_2\}$) be a cell of $D_2(G)$ (or of $\UD_2(G)$). A vertex ingredient $v=c_i$ of~$c$ is said to be critical in $c$ if either $v=0$ or, else, if replacement of $v$ by $e_v$ in $c$ fails to yield a cell of $D_2(G)$ (or of $\UD_2(G)$). Likewise, and edge ingredient $e=c_j$ of $c$ is said to be critical in $c$ if either $e$ is deleted or, else, if $e=e_v$ and there is a vertex ingredient $u$ of $c$ adjacent to $\tau(e)$ with $\tau(e)<u<v$. With such notation, $c$ is critical in Farley-Sabalka gradient field provided $c_1$ and $c_2$ are both critical in $c$. Otherwise, if the smallest\footnote{With respect to the vertex-edge ordering discussed in the previous paragraph.} of the non-critical ingredients $c_1$ and $c_2$ is:
\begin{enumerate}[(i)]
\item a vertex $v$, then $c$ is redundant and $c\nearrow d$, where $d$ is obtained from $c$ by replacing $v$ by $e_v$; 
\item an edge $e_v$, then $c$ is collapsible and $d\nearrow c$, where $d$ is obtained from $c$ by replacing $e_v$ by $v$.
\end{enumerate}
In other words, the only critical 0-cells in $D_2(G)$ are $(0,1)$ and $(1,0)$, while $\{0,1\}$ is the only critical $0$-cell in $\UD_2(G)$. All other $0$-cells are redundant. Likewise, for a $1$-cell $c$ with vertex ingredient~$u$ and edge ingredient $e$ we have:
\begin{enumerate}[(i)]\addtocounter{enumi}{2}
\item If $e$ is deleted, then $c$ is critical if $u$ is critical in $c$, otherwise $c$ is redundant.
\item If $e$ is non-deleted, say $e=e_v$, and
\begin{itemize}
\item either $u=0$ or $v<u$, then $c$ is collapsible;
\item $0<u<v$ with $u$ critical in $c$ (in which case $\tau(e_u)=\tau(e_v)$ is forced), then $c$ is critical;
\item $0<u<v$ with $u$ non-critical in $c$, then $c$ is redundant.
\end{itemize}
\end{enumerate}
Lastly, a 2-cell $c$ is critical if both of its ingredients are deleted, otherwise $c$ is collapsible.

\subsection{Graph braid groups}\label{secciongbg}
We now recover the assumption in Remark~\ref{notaninterval} about $G$ not being homeomorphic to an interval, so we can use the facts reviewed above in order to describe generators for $$P_2(G)=\pi_1(D_2(G))=\pi_1(\C_2(G))\quad\text{and}\quad B_2(G)=\pi_1(\UD_2(G))=\pi_1(\UC_2(G)),$$
the (pure and full, respectively) braid groups of two (ordered and unordered, respectively) non-colliding particles in $G$. In particular, the notation set up in Subsections~\ref{subsectionDMT} and~\ref{subsectionFSA} will be in effect throughout the rest of the paper.

The case of $B_2(G)$ is slightly easier as $\UD_2(G)$ has a single critical 0-cell, so that the 0-cells and the collapsible 1-cells of $\UD_2(G)$ span a maximal tree $\UDT$ of the 1-skeleton of $\UD_2(G)$. Thus, after collapsing $\UDT$ to its base point, which is taken to be the critical 0-cell $\{0,1\}$, we get that the critical 1-cells $\{u,e\}$ of $\UD_2(G)$ yield corresponding generators $\{u,e\}\in B_2(G)$.

Since $D_2(G)$ has two critical 0-cells, we need to identify the two components of the maximal forest $DF$ determined by the 0-cells and the collapsible 1-cells of $D_2(G)$. Recall that any collapsible 1-cell~$c$ of $D_2(G)$ with vertex ingredient $u$ and edge ingredient $e_v$ satisfies $u=0$ of $v<u$. In particular, $c$ joins 0-cells $(w_1,w_2)$ and $(w_1',w_2')$ satisfying $w_1<w_2$ if and only if $w_1'<w_2'$. Consequently, $DF$ consists of two trees $DT_u$ and $DT_d$, where the vertices $(w_1,w_2)$ in the former (latter) tree satisfy $w_1<w_2$ ($w_1>w_2$). Now, as reviewed above, we need to add a single critical 1-cell of $D_2(G)$ to $DF$ in order to get a maximal tree $DT$. For our purposes, the required critical 1-cell will have the form $(a,(b,c))$ with $b<a<c$. (Note that such an edge goes from $(a,b)\in DT_d$ to $(a,c)\in DT_u$.) The explicit values of $a,b,c$ will be spelled out later, depending of the actual graph $G$. All other critical 1-cells $(r,(s,t))$ and $((s,t),r)$ of $D_2(G)$ yield corresponding generators $(r,(s,t)),((s,t),r)\in P_2(G)$ after collapsing $DT$ to its base point, which is now taken to be $(0,1)$.  

\begin{proposition}\label{inclusion}
The inclusion $\iota\colon P_2(G)\hookrightarrow B_2(G)$ induced by the 2-fold covering $D_2(G)\to\UD_2(G)$ is determined on generators by
\begin{align*}
\iota(r,(s,t))=\begin{cases}
\{r,(s,t)\}, & \mbox{if \  $r<s<t$;} \\ 
\{a,(b,c)\}^{-1} \cdot \{r,(s,t)\}, & \mbox{if \  $s<r<t$;} \\ 
\{a,(b,c)\}^{-1} \cdot \{r,(s,t)\} \cdot \{a,(b,c)\}, & \mbox{if \  $s<t<r$,} 
\end{cases}\\
\iota((s,t),r)=\begin{cases}
\{a,(b,c)\}^{-1} \cdot \{r,(s,t)\} \cdot \{a,(b,c)\}, & \mbox{if \  $r<s<t$;} \\
\{r,(s,t)\} \cdot \{a,(b,c)\}, & \mbox{if \  $s<r<t$;} \\ 
\{r,(s,t)\}, & \mbox{if \  $s<t<r$.}
\end{cases}
\end{align*}
In particular, $\iota((b,c),a)=\{a,(b,c)\}^2$.
\end{proposition}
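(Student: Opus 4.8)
The plan is to read the homomorphism $\iota$ directly off the loop representatives~(\ref{representingloop}) in the two combinatorial models, using the covering $\pi\colon D_2(G)\to\UD_2(G)$. The first thing I would record is the compatibility of $\pi$ with the Farley--Sabalka data: since the gradient field on $\UD_2(G)$ is by construction the quotient of the one on $D_2(G)$, the map $\pi$ carries collapsible (resp.\ redundant, resp.\ critical) cells to cells of the same type, and the two ordered $1$-cells $(r,(s,t))$ and $((s,t),r)$ both project onto the single unordered $1$-cell $\{r,(s,t)\}$ with the orientation each inherits from $(s,t)$; in particular $\pi(a,(b,c))=\{a,(b,c)\}$ is critical, hence a genuine generator of $B_2(G)$. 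Consequently $\pi^{-1}(\UDT)$ is precisely the maximal forest $DF=DT_u\sqcup DT_d$, and since $\UDT$ is a tree (so simply connected), the restricted double cover $DF\to\UDT$ is trivial; thus $\pi$ restricts to isomorphisms $DT_u\xrightarrow{\ \sim\ }\UDT$ and $DT_d\xrightarrow{\ \sim\ }\UDT$, each acting on vertices by $(w_1,w_2)\mapsto\{w_1,w_2\}$.

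The key local computation is the value of $\pi(\beta_u)$ after collapsing $\UDT$ to the base point $\{0,1\}$. If $u\in DT_u$, then $\beta_u$ stays in $DT_u$, so $\pi(\beta_u)$ lies in $\UDT$ and becomes trivial. If $u\in DT_d$, then by the construction of $DT$ the path $\beta_u$ runs from $(0,1)$ through $DT_u$ to $(a,c)$, backwards across the added critical edge $(a,(b,c))$ (which is oriented from $(a,b)$ to $(a,c)$) to $(a,b)$, and then through $DT_d$ to $u$; projecting and collapsing $\UDT$ leaves exactly the factor $\{a,(b,c)\}^{-1}$ contributed by the reversed traversal of $(a,(b,c))$. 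So, modulo the collapse of $\UDT$, one has $\pi(\beta_u)=1$ for $u\in DT_u$ and $\pi(\beta_u)=\{a,(b,c)\}^{-1}$ for $u\in DT_d$.

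From here the proposition is bookkeeping. For a generator $e=(r,(s,t))$ the cell $e$ runs from $(r,s)$ to $(r,t)$, and since $s<t$ the vertex $(r,s)$ lies in $DT_u$ iff $r<s$ while $(r,t)$ lies in $DT_u$ iff $r<t$; thus the admissible orderings $r<s<t$, $s<r<t$, $s<t<r$ place the initial and terminal vertices of $e$ in $(DT_u,DT_u)$, $(DT_d,DT_u)$, $(DT_d,DT_d)$ respectively. Substituting this and the previous step into the representing loop for $e$ and applying $\pi$ gives the three values $\{r,(s,t)\}$, $\{a,(b,c)\}^{-1}\{r,(s,t)\}$, and $\{a,(b,c)\}^{-1}\{r,(s,t)\}\{a,(b,c)\}$. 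For $e=((s,t),r)$ the cell runs from $(s,r)$ to $(t,r)$, so $(s,r)\in DT_u$ iff $r>s$ and $(t,r)\in DT_u$ iff $r>t$; the same orderings now yield endpoint locations $(DT_d,DT_d)$, $(DT_u,DT_d)$, $(DT_u,DT_u)$, hence the three values in the second display. Finally, $((b,c),a)$ is a generator of $P_2(G)$ --- it is critical and distinct from the single edge $(a,(b,c))$ adjoined to $DF$ --- and it is the instance of $((s,t),r)$ with $(s,t,r)=(b,c,a)$, whose ordering $b<a<c$ is the middle subcase; therefore $\iota((b,c),a)=\{a,(b,c)\}\{a,(b,c)\}=\{a,(b,c)\}^2$.

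The part I expect to demand the most care is not any individual computation but the orientation bookkeeping running through all of it: that the two ordered cells above $\{r,(s,t)\}$ project to the \emph{same} oriented generator of $B_2(G)$, and that the added edge $(a,(b,c))$ is traversed \emph{backwards} inside $\beta_u$ for $u\in DT_d$; getting either sign wrong would transpose or invert terms in the stated formulas.
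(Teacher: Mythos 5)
Your proposal is correct and follows essentially the same route as the paper: it reads $\iota$ off the representing loops~(\ref{representingloop}), using that $DT$ splits as $DT_u\sqcup DT_d$ joined by the single critical edge $(a,(b,c))$, so that the tree portions of $\beta_u$ collapse into $\UDT$ and the only surviving contribution is the factor $\{a,(b,c)\}^{-1}$ whenever the endpoint lies in $DT_d$. The only difference is organizational: the paper writes out one case ($\iota((s,t),r)$ with $r<s<t$) and leaves the rest to the reader, while you carry out the uniform endpoint bookkeeping for all six cases, with the orientation conventions handled correctly.
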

\begin{proof}
Since the 2-fold covering projection $D_2(G)\to\UD_2(G)$ is cellular and preserves the critical/col\-laps\-ible/redundant nature of cells, the effect of the induced monomorphism $\iota$ is easily readable in terms of the generators describe above. Here we only consider the case of $\iota((s,t),r)$ with $r<s<t$, assumption in force throughout the rest of the proof, leaving to the reader the fully parallel details in the other five cases.

As a path in $D_2(G)$, the edge $((s,t),r)$ goes from $(s,r)\in DT_d$ to $(t,r)\in DT_d$. Then the paths $\beta_{(s,r)}$ and $\beta_{(t,r)}$ in~(\ref{representingloop}) are given by
$$
\beta_{(s,r)}=\gamma^{(0,1)}_{(a,c)}\star(a,(b,c))^{-1}\star\delta^{(a,b)}_{(s,r)}\mbox{ \ \ \ \ and \ \ \ \ } 
\beta_{(t,r)}=\gamma^{(0,1)}_{(a,c)}\star(a,(b,c))^{-1}\star\delta^{(a,b)}_{(t,r)},
$$
where $\gamma$-paths and $\delta$-paths consist of collapsible cells. Explicitly, $\gamma^{(0,1)}_{(a,c)}$ is the unique simple path in $DT_u$ connecting $(0,1)$ to $(a,c)$, while $\delta^{(a,b)}_{(s,r)}$ and $\delta^{(a,b)}_{(t,r)}$ are the unique simple paths in $DT_d$ connecting $(a,b)$ to $(s,r)$ and $(t,r)$, respectively.
$$\xymatrix{
& & & & (s,r) \ar[dd]^{((s,t),r)}\\
(0,1) \ar[r]^{\gamma^{(0,1)}_{(a,c)}} & (a,c) && (a,b) \ar[ll]_{(a,(b,c))} \ar[ur]^{\delta^{(a,b)}_{(s,r)}} \ar[dr]_{\delta^{(a,b)}_{(t,r)}}\\
& & & & (t,r)}$$
The loop~(\ref{representingloop}) representing $((s,t),r)\in P_2(G)$ is then
\begin{equation}\label{representingloopexpanded}
\gamma^{(0,1)}_{(a,c)}\star(a,(b,c))^{-1}\star\delta^{(a,b)}_{(s,r)}\star((s,t),r)\star
\left( \gamma^{(0,1)}_{(a,c)}\star(a,(b,c))^{-1}\star\delta^{(a,b)}_{(t,r)} \right)^{-1},
\end{equation}
and the asserted expression for $\iota((s,t),r)$ now follows by noticing that the portions corresponding to $\gamma$-paths and $\delta$-paths are sent by $\iota$ into $UDT$ and, so, get squeezed to the base point $\{0,1\}$.
\end{proof}

From this point on we will think of $P_2(G)$ as a honest subgroup of $B_2(G)$, omitting to write the symbol $\iota$ when thinking of an element in $P_2(G)$ as an element of $B_2(G)$.

\begin{corollary}\label{conjugacion}
For a critical 1-cell $e$ of $D_2(G)$ other than $(a,(b,c))$, the conjugate
$$
\{a,(b,c)\}\cdot e\cdot\{a,(b,c)\}^{-1}\in P_2(G)\,\triangleleft\,B_2(G)
$$
is described as follows:
\begin{align*}
\mbox{For $r<s<t$,\hspace{1mm}}&\begin{cases}
\{a,(b,c)\}{\cdot}(r,(s,t)){\cdot}\{a,(b,c)\}^{-1}=((b,c),a){\cdot}((s,t),r){\cdot}((b,c),a)^{-1};\\
\{a,(b,c)\}{\cdot}((s,t),r){\cdot}\{a,(b,c)\}^{-1}=(r,(s,t)).\\
\end{cases} \\\rule{0mm}{13mm}
\mbox{For $s<r<t$,\hspace{1mm}}&\begin{cases}
\{a,(b,c)\}{\cdot}(r,(s,t)){\cdot}\{a,(b,c)\}^{-1}=((s,t),r){\cdot}((b,c),a)^{-1};\\
\{a,(b,c)\}{\cdot}((s,t),r){\cdot}\{a,(b,c)\}^{-1}=
\begin{cases}
((b,c),a), & \mbox{if $(r,(s,t))=(a,(b,c))$;}\\
((b,c),a){\cdot}(r,(s,t)), & \mbox{otherwise.}
\end{cases}
\end{cases} \\\rule{0mm}{9mm}
\mbox{For $s<t<r$,\hspace{1mm}}&\begin{cases}
\{a,(b,c)\}{\cdot}(r,(s,t)){\cdot}\{a,(b,c)\}^{-1}=((s,t),r);\\
\{a,(b,c)\}{\cdot}((s,t),r){\cdot}\{a,(b,c)\}^{-1}=((b,c),a){\cdot}(r,(s,t)){\cdot}((b,c),a)^{-1}.\\
\end{cases} 
\end{align*}
\end{corollary}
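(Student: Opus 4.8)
The plan is to reduce everything to the one genuinely non-formal ingredient already isolated in Proposition~\ref{inclusion}, namely the equality $\{a,(b,c)\}^2=((b,c),a)$ in $B_2(G)$ (here and below $P_2(G)$ is identified with its image under $\iota$, per the convention adopted after the proof of Proposition~\ref{inclusion}). Write $\phi$ for conjugation by $\{a,(b,c)\}$ on $B_2(G)$. Since $P_2(G)\triangleleft B_2(G)$ has index~$2$, $\phi$ restricts to an automorphism of $P_2(G)$, and the above equality yields the key relation $\phi^2(x)=\{a,(b,c)\}^2\,x\,\{a,(b,c)\}^{-2}=((b,c),a)\cdot x\cdot((b,c),a)^{-1}$ for all $x\in P_2(G)$. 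It is this observation ---that $\phi^2$ is inner conjugation by $((b,c),a)$--- that will convert the $\phi^{-1}$'s that fall out of Proposition~\ref{inclusion} into the honest conjugations appearing in the statement.

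First I would invert the six formulas of Proposition~\ref{inclusion}. Reading each of them as an identity in $B_2(G)$ and solving for the $B_2(G)$-generator $\{r,(s,t)\}$, one gets, according to the ordering of $r,s,t$:
\[
\{r,(s,t)\}=\begin{cases}
(r,(s,t))=\phi\big(((s,t),r)\big), & r<s<t;\\
\{a,(b,c)\}\cdot(r,(s,t))=((s,t),r)\cdot\{a,(b,c)\}^{-1}, & s<r<t;\\
\phi\big((r,(s,t))\big)=((s,t),r), & s<t<r.
\end{cases}
\]

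Then I would treat the six cases of the corollary one at a time. In each case $e$ is one of $(r,(s,t))$ or $((s,t),r)$; I would substitute the appropriate line of the display above, apply $\phi$, rewrite the resulting products $\{a,(b,c)\}^{\pm1}\cdot(\text{generator})$ again via the display where needed, and collapse any leftover $\{a,(b,c)\}^{\pm2}$ to $((b,c),a)^{\pm1}$ through the $\phi^2$ relation. For instance, for $r<s<t$ the display gives $(r,(s,t))=\phi(((s,t),r))$, hence $\phi\big((r,(s,t))\big)=\phi^2\big(((s,t),r)\big)=((b,c),a)\cdot((s,t),r)\cdot((b,c),a)^{-1}$, while $\phi\big(((s,t),r)\big)$ is read off immediately as $(r,(s,t))$; the four remaining cases are completely parallel. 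The only point needing a word of care is the bifurcation in the case $s<r<t$ with $e=((s,t),r)$: there the manipulation produces $((b,c),a)\cdot(r,(s,t))$, and precisely when $(r,(s,t))=(a,(b,c))$ this last generator is trivial in $P_2(G)$ ---being an edge of the maximal tree $DT$--- so the product reduces to $((b,c),a)$, which accounts for the two sub-cases.

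I do not anticipate a real obstacle: the argument is a purely formal manipulation inside $B_2(G)$, and the only thing to watch is the bookkeeping ---keeping the three orderings of $\{r,s,t\}$ and the two ordered forms of a cell straight, and not forgetting that $(a,(b,c))$ has been trivialized in passing to $\pi_1$.
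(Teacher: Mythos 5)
Your proposal is correct and is essentially the argument the paper intends: the corollary is stated without proof precisely because it follows by the formal manipulation you describe, namely rewriting each $P_2(G)$-generator via the six formulas of Proposition~\ref{inclusion}, conjugating by $\{a,(b,c)\}$, and using $\{a,(b,c)\}^2=((b,c),a)$ (plus the triviality of the tree edge $(a,(b,c))$ in $\pi_1$, which you correctly invoke for the bifurcated subcase). All six resulting identities check out against the statement.
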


\begin{proposition}\label{thetaclasificante}
If $G$ is not homeomorphic to an interval, then the morphism $\theta\colon B_2(G)\to\mathbb{Z}_2$ induced in fundamental groups by the classifying map of the double covering $D_2(G)\to\UD_2(G)$ is given on generators by
$$
\theta(\{r,(s,t)\})=\begin{cases} 
1, & \mbox{if \ $s<r<t$;} \\ 0, & \mbox{otherwise.}
\end{cases}
$$
\end{proposition}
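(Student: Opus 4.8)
The plan is to recognize $\theta$ as the quotient homomorphism $B_2(G)\to B_2(G)/P_2(G)\cong\mathbb{Z}_2$. Indeed, by the defining property of a classifying map, the double covering $D_2(G)\to\UD_2(G)$ corresponds to the index-two subgroup $\ker\theta\le B_2(G)$; but, by covering-space theory, this same cover corresponds to $P_2(G)=\operatorname{im}(\iota)\le B_2(G)$ in the notation of Proposition~\ref{inclusion}. Hence $\ker\theta=\operatorname{im}(\iota)$, while $\theta$ is surjective because $D_2(G)$, being a model for the connected space $\C_2(G)$ of Remark~\ref{notaninterval}, is connected (equivalently, some motion of two unordered particles in $G$ interchanges them). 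Since the critical $1$-cells $\{r,(s,t)\}$ of $\UD_2(G)$ generate $B_2(G)$ (Subsection~\ref{secciongbg}), the computation of $\theta$ reduces to deciding, for each such generator, whether it lies in $\operatorname{im}(\iota)$.

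First I would dispose of the two extreme cases directly from Proposition~\ref{inclusion}: if $r<s<t$ then $\{r,(s,t)\}=\iota(r,(s,t))$, and if $s<t<r$ then $\{r,(s,t)\}=\iota((s,t),r)$; either way $\{r,(s,t)\}\in\operatorname{im}(\iota)=\ker\theta$, so $\theta(\{r,(s,t)\})=0$, as claimed. For the middle case $s<r<t$, Proposition~\ref{inclusion} yields $\{a,(b,c)\}^{-1}\cdot\{r,(s,t)\}=\iota(r,(s,t))\in\ker\theta$ for every such generator other than $\{a,(b,c)\}$ itself, whence $\theta(\{r,(s,t)\})=\theta(\{a,(b,c)\})$; as $\{a,(b,c)\}$ is itself of this middle type (because $b<a<c$), it follows that $\theta$ takes one common value $\epsilon\in\mathbb{Z}_2$ on all middle-type generators.

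It remains to show $\epsilon=1$, and here I would simply invoke the surjectivity of $\theta$: were $\epsilon=0$, then $\theta$ would annihilate every generator $\{r,(s,t)\}$ of $B_2(G)$ and hence vanish identically, a contradiction. So $\epsilon=1$, which completes the proof. The points requiring a little care are the identification $\ker\theta=\operatorname{im}(\iota)$ together with the surjectivity of $\theta$, and the bookkeeping that the auxiliary critical cell $(a,(b,c))$ is deliberately omitted from the generating set of $P_2(G)$ while $\{a,(b,c)\}$ is a bona fide middle-type generator of $B_2(G)$; neither is a serious obstacle. Alternatively, and perhaps more transparently, one can establish the formula by lifting the representing loop~(\ref{representingloop}) of $\{r,(s,t)\}$ through the covering $D_2(G)\to\UD_2(G)$: using the orientation conventions of Subsection~\ref{subsectionFSA} and the fact that the preimage of the maximal tree $\UDT$ splits as the disjoint union $DT_u\sqcup DT_d$, one checks that the lift based at $(0,1)$ returns to $(0,1)$ when $r<s<t$ or $s<t<r$ (the lifted copy of $\{r,(s,t)\}$ then having both endpoints in $DT_u$), whereas for $s<r<t$ that lifted edge runs from a vertex of $DT_u$ to a vertex of $DT_d$, so the lift ends at $(1,0)$ and $\theta(\{r,(s,t)\})=1$.
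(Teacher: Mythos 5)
Your main argument is correct and is essentially the paper's proof: you use Proposition~\ref{inclusion} to place $\{r,(s,t)\}$ (or $\{a,(b,c)\}^{-1}\{r,(s,t)\}$ in the middle case) in $\iota(P_2(G))=\ker\theta$, and then force $\theta(\{a,(b,c)\})=1$ from surjectivity of $\theta$, which holds because $\C_2(G)$ is connected when $G$ is not an interval. The lifting argument you sketch at the end is also sound (and even bypasses the surjectivity step), but it is an optional extra rather than a needed repair.
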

\begin{proof}
Since $\theta$ vanishes on $P_2(G)$, the relations
\begin{align*}
(r,(s,t))&=\{r,(s,t)\}, \mbox{ for } r<s<t, \\
((s,t),r)&=\{r,(s,t)\}, \mbox{ for }s<t<r
\end{align*}
force $\theta(\{r,(s,t)\})=0$ provided $r<s$ or $t<r$. On the other hand, for $s<r<t$, the relation $$((s,t),r)=\{r,(s,t)\}\cdot\{a,(b,c)\}$$ gives $\theta(\{r,(s,t)\})=\theta(\{a,(b,c)\})$, which is forced to be 1, since $\theta$ is surjective (recall that $G$ is not an interval, so that $\C_2(G)$ is connected).
\end{proof}

\begin{proposition}\label{proyeccion1}
The morphism $(p_1)_{\#}\colon P_2(G)\to\pi_1(G)$ induced in fundamental groups by the projection $p_1\colon D_2(G)\to G$ onto the first coordinate is trivial on generators $(r,(s,t))$, while
$$
(p_1)_{\#}((s,t),r)=\begin{cases}
(s,t), & \mbox{if $(s,t)$ is a deleted edge;} \\ 1, &\mbox{otherwise.}
\end{cases}
$$
\end{proposition}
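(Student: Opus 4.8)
The plan is to compute $(p_1)_\#$ directly on the loop~(\ref{representingloop}) representing each generator, using the fact that $p_1$ collapses to a point every cell of $D_2(G)$ whose first ingredient is a vertex, while it maps every cell whose first ingredient is an edge $e$ onto $e$. The first thing I would establish is that $p_1$ carries the maximal tree $DT=DT_u\cup DT_d\cup\{(a,(b,c))\}$ into the maximal tree $T$ of $G$. Indeed, any collapsible $1$-cell of $D_2(G)$ has one non-deleted edge $e_v$ as an ingredient and one vertex as the other (a $1$-cell whose edge ingredient is deleted is either critical or redundant, per the Farley--Sabalka classification recalled in Subsection~\ref{subsectionFSA}), so under $p_1$ it maps either to a vertex of $G$ or to the edge $e_v\subseteq T$; and the one extra critical $1$-cell $(a,(b,c))$ of $DT$ has first ingredient the vertex $a$, so $p_1((a,(b,c)))=\{a\}\subseteq T$. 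Hence, for every $0$-cell $(w_1,w_2)$ of $D_2(G)$, the path $p_1(\beta_{(w_1,w_2)})$ lies in $T$ and runs from $0$ to $w_1$; since $T$ is a tree, it is homotopic rel endpoints to the unique reduced edge-path $\ell_{w_1}$ in $T$ from $0$ to $w_1$.

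With this in place the two cases follow at once. For a generator $(r,(s,t))$ (so $s<t$), the corresponding cell runs from $(r,s)$ to $(r,t)$ and $p_1$ collapses it to the vertex $r$, so $(p_1)_\#(r,(s,t))$ is represented by the loop $\ell_r\star\mathrm{const}_r\star\ell_r^{-1}$, which lies entirely in the contractible subcomplex $T$ and is therefore null-homotopic. For a generator $((s,t),r)$ (again $s<t$), the cell runs from $(s,r)$ to $(t,r)$ and $p_1$ sends it to the edge $(s,t)$ traversed from $s$ to $t$, so $(p_1)_\#((s,t),r)$ is represented by $\ell_s\star(s,t)\star\ell_t^{-1}$. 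If $(s,t)$ is non-deleted this loop lies in $T$ and is trivial; if $(s,t)$ is deleted it is, by the standard description of $\pi_1(G,0)$ as the free group on the deleted edges (each read off against $T$ as in Subsection~\ref{subsectionFSA}), exactly the generator denoted $(s,t)$. This is the asserted formula.

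The hard part will be the bookkeeping in the first step: one must use the precise Farley--Sabalka description of which $1$-cells are collapsible, and in particular the point that a collapsible $1$-cell never has a deleted edge as an ingredient, in order to be sure that $p_1(DT)\subseteq T$. Once that is checked, every homotopy in the argument takes place inside the tree $T$ and the rest is formal; the only remaining routine verification is the identification, in the deleted case, of $\ell_s\star(s,t)\star\ell_t^{-1}$ with the chosen free generator $(s,t)$ of $\pi_1(G,0)$, which is immediate from the ``collapse the maximal tree'' picture already invoked above.
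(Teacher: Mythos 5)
Your proposal is correct and follows essentially the same route as the paper: compute $(p_1)_{\#}$ on the representing loops of~(\ref{representingloop}), observing that the tree portions ($\beta$-paths in $DT$, whose collapsible cells have non-deleted edge ingredients, plus the connecting critical cell $(a,(b,c))$ which projects to a point) land in $T$ and hence contribute nothing, so only the image of the critical cell itself survives. You merely spell out uniformly the case analysis that the paper carries out for one case and leaves as an exercise for the others.
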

\begin{proof}
The conclusion for $(p_1)_{\#}((s,t),r)$ when $r<s<t$ follows from the fact that~(\ref{representingloopexpanded}) is a representing loop for $((s,t),r)\in P_2(G)$, and noticing that the $p_1$-image of a collapsible 1-edge lands in the tree $T$ and, so, it plays no role in $\pi_1(G)$. The other five cases are treated similarly and are left as an exercise for the reader.
\end{proof}

\section{Borsuk-Ulam property}\label{sectiononBUP}
Let $(\Gamma,\tau,G)$ be as in Section~\ref{intro}. We assume in effect all hypotheses, ingredients and constructions set up in Subsection~\ref{secciongbg} around $G$ and its braid groups. Theorem~\ref{BUviatrenzas} below is a ``graph'' version of \cite[Theorem 7]{MR3947929}, proven with the same argument, using that graph configuration spaces are $K(\pi,1)$'s.

\begin{theorem}\label{BUviatrenzas}
The classes $\alpha\in[\Gamma,G]$ for which the Borsuk-Ulam property fails are presicely those fitting on a commutative diagram of groups
\begin{equation}\label{technique}\xymatrix{
 & \pi_1(G) \\
\pi_1(\Gamma) \ar[r]^{\varphi} \ar[ur]^{\alpha_{\#}} \ar@{^{(}->}[d] & P_2(G) \ar[u]_{(p_1)_{\#}} \ar@{^{(}->}[d]\\
\pi_1(\Gamma/\tau) \ar[r]^{\psi} \ar@{>>}[d]_{\theta_1} & B_2(G) \ar@{>>}[d]^{\theta_2}\\
\mathbb{Z}_2 \ar@{=}[r] & \mathbb{Z}_2,
}\end{equation}
for suitable morphisms $\varphi$ and $\psi$. Here the two central group inclusions are induced by the obvious 2-fold covering projections, while morphisms $\theta_i$ are induced by the corresponding classifying maps. In particular, both downward vertical sequences are short exact.
\end{theorem}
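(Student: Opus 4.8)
The plan is to reinterpret the failure of the Borsuk--Ulam property for $\alpha$ as an equivariant lifting problem and then to replace every space by its fundamental group, which is legitimate because all the spaces in sight are aspherical: $\Gamma$, $\Gamma/\tau$ and $G$ are graphs, hence $K(\pi,1)$'s, and $\C_2(G)$ and $\UC_2(G)$ (equivalently their Abrams models $D_2(G)$ and $\UD_2(G)$) are $K(\pi,1)$'s by the asphericity of graph configuration spaces recalled in Section~\ref{sectiononDMT}. Consequently, for each arrow appearing in (\ref{technique}) the rule ``map $\rightsquigarrow$ induced homomorphism on $\pi_1$'' gives, once compatible base points are fixed, a bijection between based homotopy classes of maps and group homomorphisms, and homotopy--commutativity of a diagram of such spaces is equivalent to commutativity of the associated diagram of fundamental groups.

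First I would treat the forward implication. Suppose the Borsuk--Ulam property fails for $\alpha$ and pick a representative $f\in\alpha$ with $f(x)\neq f(\tau\cdot x)$ for all $x\in\Gamma$. Then $x\mapsto(f(x),f(\tau\cdot x))$ defines a continuous map $\widehat f\colon\Gamma\to\C_2(G)$ which is equivariant for $\tau$ on the source and the coordinate swap on the target, so it descends to $\overline f\colon\Gamma/\tau\to\UC_2(G)$; by construction the double cover $\Gamma\to\Gamma/\tau$ is the pullback along $\overline f$ of the double cover $\C_2(G)\to\UC_2(G)$. Setting $\varphi=\widehat f_{\#}$ and $\psi=\overline f_{\#}$ (with the base point of $\C_2(G)$ taken to be the pair determined by that of $\Gamma$), naturality of classifying maps gives $\theta_2\psi=\theta_1$, the central inclusions intertwine $\varphi$ and $\psi$, and the identity $p_1\circ\widehat f=f$ yields $(p_1)_{\#}\varphi=f_{\#}=\alpha_{\#}$. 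This is precisely diagram~(\ref{technique}).

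Conversely, given (\ref{technique}), I would realize $\psi\colon\pi_1(\Gamma/\tau)\to B_2(G)$ by a map $\overline f\colon\Gamma/\tau\to\UC_2(G)$, which exists because the target is aspherical. The hypothesis $\theta_2\psi=\theta_1$ says that the pullback along $\overline f$ of $\C_2(G)\to\UC_2(G)$ is isomorphic over $\Gamma/\tau$ to the double cover $\Gamma\to\Gamma/\tau$, so $\overline f$ admits a $\tau$-equivariant lift $\widehat f\colon\Gamma\to\C_2(G)$; since $P_2(G)\hookrightarrow B_2(G)$ is injective and $\widehat f_{\#}$ followed by this inclusion agrees with $\psi$ restricted to $\pi_1(\Gamma)$, hence with $\varphi$ followed by the inclusion, we get $\widehat f_{\#}=\varphi$. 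Putting $f=p_1\circ\widehat f\colon\Gamma\to G$, equivariance of $\widehat f$ and the fact that it lands in $\C_2(G)$ give $f(\tau\cdot x)\neq f(x)$ for all $x$, while $f_{\#}=(p_1)_{\#}\varphi=\alpha_{\#}$ shows $f\in\alpha$; thus the Borsuk--Ulam property fails for $\alpha$. Finally, the short exactness of the two downward columns is the exact sequence of the double covers $\Gamma\to\Gamma/\tau$ and $\C_2(G)\to\UC_2(G)$: the kernels of $\theta_1$ and $\theta_2$ are $\pi_1(\Gamma)$ and $P_2(G)$, and surjectivity holds because $\Gamma$ and $\C_2(G)$ are connected (the latter by Remark~\ref{notaninterval}; compare Proposition~\ref{thetaclasificante}).

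The main obstacle is the base-point bookkeeping together with the equivariant lifting step: one must choose base points so that the square of fundamental groups commutes \emph{on the nose}, and one must verify that the covering-space lifting criterion produces a $\tau$-equivariant $\widehat f$ inducing exactly $\varphi$ rather than merely a conjugate of it --- this is where asphericity of the configuration spaces and injectivity of $P_2(G)\hookrightarrow B_2(G)$ are genuinely used. The remaining content is the translation recorded above, carried out exactly as in \cite[Theorem~7]{MR3947929}.
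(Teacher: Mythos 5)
Your proposal is exactly the argument the paper intends: the paper offers no independent proof, saying only that Theorem~\ref{BUviatrenzas} is ``proven with the same argument'' as \cite[Theorem~7]{MR3947929} using that graph configuration spaces are $K(\pi,1)$'s, and your equivariant pair-map $x\mapsto(f(x),f(\tau\cdot x))$, the covering-space lifting in the converse, and the asphericity-based translation to fundamental groups constitute precisely that argument. The base-point/conjugation bookkeeping you flag is the same wrinkle handled in the cited reference (and is repairable here, e.g.\ using surjectivity of $(p_1)_{\#}$ to absorb the change-of-basepoint conjugation), so there is no substantive gap.
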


\begin{example}\label{casodearboles}{\em
Assume $G=T$, a tree (not homeomorphic to an interval). Then $\theta_2$ is surjective. Since $\pi_1(\Gamma/\tau)$ is free (see paragraph below), it is possible to choose a lifting $\psi\colon\pi_1(\Gamma/\tau)\to B_2(G)$ of~$\theta_1$ along~$\theta_2$. The restricted map $\varphi\colon\pi_1(\Gamma)\to P_2(G)$ then completes diagram~(\ref{technique}) with $\alpha_0\in[\Gamma,G]$ necessarily the unique (trivial) homotopy class. This proves Theorem~\ref{maintheorem} when $G$ is contractible. Therefore, throughout the rest of the section we assume that $G\neq T$. In particular $\pi_1(G)=F(z_1,\ldots,z_k)$, the free group on generators $z_i=(x_i,y_i)$, where $\{(x_i,y_i)\}_{i=1,\ldots,k}$ is the set of deleted edges of $G$ (recall $x_i<y_i$ for all $i$, which gives the orientation of the representing loop for $z_i$ ---after collapsing~$T$ to a point). For convenience, we will assume that the deleted edges have been arranged so that $y_1<y_2<\ldots<y_k$.
}\end{example}

Ignoring vertices of degree 2, $\tau$ is forced to act at the level of vertices and edges. Thus $\Gamma/\tau$ has a natural graph structure. A simple Euler characteristic argument then show that the free groups $\pi_1(\Gamma)$ and $\pi_1(\Gamma/\tau)$ have respective ranks $2m+1$ and $m+1$, where $m=-\chi(\Gamma)/2\geq0$. In particular,
\begin{equation}\label{elts}
[\Gamma,G]=F(z_1,\ldots,z_k)^{2m+1}.
\end{equation}
More explicitly:

\begin{lemma}\label{generatorsviaranks}
It is possible to choose generators $a,a_1,a'_1,a_2,a'_2,\ldots,a_m,a'_m$ of $\pi_1(\Gamma)$ as well as generators $c,c_1,c_2,\ldots,c_m$ of $\pi_1(\Gamma/\tau)$ satisfying
$$\mbox{
$a=c^2$, $a_i=c_i$, $a'_i=cc_ic^{-1}$, $\theta_1(c)=1\in\mathbb{Z}_2$, and $\theta_1(c_i)=0\in\mathbb{Z}_2$
}$$
for $i=1,2,\ldots,m$. In this setting, $\alpha\in[\Gamma,G]$ is identified under~(\ref{elts}) with the tuple $$(\alpha_{\#}(a),\alpha_{\#}(a_1),\alpha_{\#}(a'_1),\ldots,\alpha_{\#}(a_m),\alpha_{\#}(a'_m)).$$
\end{lemma}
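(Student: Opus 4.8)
The plan is to produce the desired generating sets by exploiting the equivariant graph structure on $\Gamma$ with respect to $\tau$, together with the short exact sequence
\[
1\to\pi_1(\Gamma)\to\pi_1(\Gamma/\tau)\xrightarrow{\theta_1}\mathbb{Z}_2\to1
\]
coming from the double cover $\Gamma\to\Gamma/\tau$. First I would fix a maximal tree $T'$ of the quotient graph $\Gamma/\tau$ and analyze its preimage in $\Gamma$. Because $\theta_1$ is surjective (the cover is connected) there is at least one edge of $\Gamma/\tau$ whose two lifts in $\Gamma$ are swapped into a single $\tau$-orbit consisting of one edge of $\Gamma$ covering it twice... more precisely, each edge of $\Gamma/\tau$ either lifts to two disjoint edges of $\Gamma$ (if its endpoints are ``split'') or to a single edge (if $\tau$ swaps its endpoints). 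I would choose $T'$ so that exactly one edge $\varepsilon_0$ of $T'$ is of the swapped type, hence its preimage in $\Gamma$ is a path connecting the two ``sheets'' of the tree-part of the cover. Collapsing the preimage of $T'$ in $\Gamma$ and collapsing $T'$ in $\Gamma/\tau$, the deleted edges of $\Gamma/\tau$ give free generators $c,c_1,\dots,c_m$ of $\pi_1(\Gamma/\tau)$, where the count $m+1$ is forced by the rank computation; I would label $c$ as a generator coming from a deleted edge lying over a swapped configuration (so $\theta_1(c)=1$) and the $c_i$ as those coming from deleted edges whose endpoints split (so $\theta_1(c_i)=0$). A standard Reidemeister–Schreier / covering-space computation with the two-element Schreier transversal $\{1,c\}$ then yields $2(m+1)-1 = 2m+1$ generators of $\pi_1(\Gamma)$ of exactly the form $c^2$, $c_i$, $c c_i c^{-1}$.

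The key steps, in order: \textbf{(1)} Set up the equivariant CW/graph structure: after ignoring degree-2 vertices, $\tau$ acts simplicially, $\Gamma/\tau$ is a graph, and $\Gamma\to\Gamma/\tau$ is a double cover of graphs; record the Euler-characteristic ranks $\operatorname{rk}\pi_1(\Gamma)=2m+1$, $\operatorname{rk}\pi_1(\Gamma/\tau)=m+1$. \textbf{(2)} Choose a maximal tree $T'\subset\Gamma/\tau$ whose preimage $\widetilde{T'}\subset\Gamma$ is connected — this is possible precisely because $\theta_1$ restricted to $\pi_1$ of any tree is zero, so connectedness of $\widetilde{T'}$ is automatic for a maximal tree, and moreover $\widetilde{T'}$ is itself a tree (a connected cover of a tree of the same total vertex count is a tree). \textbf{(3)} With base vertex $\tilde v_0$ over $v_0$ and Schreier transversal $\{1,c\}$ where $c$ is represented by a loop in $\Gamma/\tau$ with $\theta_1(c)=1$, apply Reidemeister–Schreier: the resulting generators of $\pi_1(\Gamma)$ are indexed by pairs (transversal element, deleted edge of $\Gamma/\tau$), modulo those that lie in $\widetilde{T'}$. \textbf{(4)} Identify which generators survive: for a deleted edge $d$ with $\theta_1(d)=0$ one gets $d$ (set $a_i=c_i$) and $cdc^{-1}$ (set $a'_i = c c_i c^{-1}$); for the deleted edge $d=c$ with $\theta_1(c)=1$, the two Schreier generators are $c\cdot c$ and $c\cdot c\cdot c^{-1}\cdot\ldots$ — one of them is trivial in $\widetilde{T'}$ and the other is $c^2$, giving $a=c^2$. \textbf{(5)} Count: $1 + 2m = 2m+1$, matching the rank, so these freely generate. \textbf{(6)} The final identification of $\alpha\in[\Gamma,G]$ with the tuple $(\alpha_\#(a),\alpha_\#(a_1),\alpha_\#(a_1'),\dots)$ is then immediate from~(\ref{elts}) since $\alpha_\#$ is determined by its values on a free generating set.

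The main obstacle I expect is step \textbf{(2)}–\textbf{(4)}: namely verifying carefully that the maximal tree $T'$ of $\Gamma/\tau$ can be chosen so that the Reidemeister–Schreier output is \emph{exactly} in the stated normal form $\{c^2, c_i, cc_ic^{-1}\}$ rather than merely Nielsen-equivalent to it. The subtlety is bookkeeping the Schreier rewriting for the loop $c$ of odd $\theta_1$-value: one must check that, with the transversal $\{1,c\}$ and $c$ chosen to be (the homotopy class of) a single deleted edge of $\Gamma/\tau$ together with a path in $T'$, the two Schreier generators associated to the ``letter'' $c$ reduce to the trivial element and $c^2$ respectively. This requires knowing that the preimage in $\Gamma$ of the path-plus-edge representing $c$ is a single edge-path connecting $\tilde v_0$ to $\tau\tilde v_0$, which in turn needs the swapped-endpoint analysis of step \textbf{(1)}. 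Everything else — the rank count, the vanishing of $\theta_1$ on tree loops, and the final tuple identification — is routine. I would also remark that the freeness of $\pi_1(\Gamma/\tau)$ (used implicitly, and explicitly in Example~\ref{casodearboles}) is just the statement that the fundamental group of a graph is free.
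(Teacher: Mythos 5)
Your core tool --- Reidemeister--Schreier with the Schreier transversal $\{1,c\}$ --- is exactly what the paper uses, but your setup contains a genuine gap. First, step (2) is backwards: since $\theta_1$ vanishes on any maximal tree $T'\subset\Gamma/\tau$, the restriction of the double cover to $T'$ is the \emph{trivial} double cover, so the preimage $\widetilde{T'}$ is two disjoint copies of $T'$, never a connected tree; the very reason you cite implies the opposite of what you claim. Relatedly, the ``swapped type'' edge whose preimage is ``a single edge of $\Gamma$ covering it twice'' does not exist: a free involution cannot map an edge of $\Gamma$ to itself (it would fix the midpoint), so every edge of $\Gamma/\tau$ has exactly two edge lifts. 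These slips could be repaired (build a spanning tree of $\Gamma$ from the two disjoint copies of $T'$ plus one lift of a deleted edge with $\theta_1$-value $1$), but then the deeper problem surfaces: your step (4) tacitly assumes that $T'$ can be chosen so that \emph{exactly one} deleted-edge generator of $\pi_1(\Gamma/\tau)$ has $\theta_1$-value $1$ and all the others have value $0$. That is false in general: if $\Gamma/\tau$ is a wedge of two circles and $\theta_1$ is nontrivial on both loops (a legitimate case, with $\Gamma$ the corresponding connected double cover and $\tau$ its deck involution), the only spanning tree is the vertex and both deleted-edge generators have $\theta_1=1$. Without that normalization, Reidemeister--Schreier still produces a free basis of $\pi_1(\Gamma)$, but not one in the stated form $\{c^2,\,c_i,\,cc_ic^{-1}\}$ relative to a basis of $\pi_1(\Gamma/\tau)$ with the stated $\theta_1$-values.

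The missing idea --- and the one the paper uses --- is an algebraic change of basis performed \emph{before} Reidemeister--Schreier: start from an arbitrary free basis $\{e_0,e_1,\dots,e_m\}$ of $\pi_1(\Gamma/\tau)$ with $\theta_1(e_0)=1$, set $c:=e_0$, and replace $e_i$ by $c_i:=e_0e_i$ whenever $\theta_1(e_i)=1$ (a Nielsen move), so that the new basis $\{c,c_1,\dots,c_m\}$ satisfies $\theta_1(c)=1$ and $\theta_1(c_i)=0$. With that normalization in hand, your steps (3)--(6) (transversal $\{1,c\}$, the rewriting yielding $c^2$, $c_i$, $cc_ic^{-1}$, the rank count, and the tuple identification via~(\ref{elts})) go through and essentially coincide with the paper's argument, which is purely algebraic and never needs equivariant spanning trees. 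As written, however, your proposal lacks this normalization, and the topological mechanism you offer in its place does not supply it.
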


\begin{proof}
This lemma is part of the folklore. Since we cannot find an explicit reference, we sketch a proof. Suppose 
we have an epimorphism  $\theta:\pi_1(\Gamma/\tau) \to  \mathbb{Z}_2$ and let $\{e_0,e_1,...,e_m\}$ be an arbitrary base. Assume without loos of generality that $\theta(e_0)=1\in\mathbb{Z}_2$ and set $c:=e_0$. If $\theta(e_i)=0$ for $i>0$,  then  
set $c_i:=e_i$. Otherwise, let $c_i:=e_0\cdot e_i$. Then we have constructed a base $\{c, c_1,...,c_m\}$ with the desired $\theta$-properties. Now we apply the 
Reidemeister-Schreier process to find a presentation of $\ker(\theta)$, using the set of generators ${c, c_1,...,c_m}$ and, as Schreier 
system, $\{1, c\}$. It follows that the kernel has a presentation given by elements as in the statement of the lemma subject to  
no relation.  The result follows. 
\end{proof}

\begin{proof}[Proof of Theorem~\ref{secondmain}]
 It is a standard fact that the right-hand side column in~(\ref{technique}) becomes
 \begin{equation}\label{babyexample} \xymatrix{
\pi_1(G)=\mathbb{Z} & &\mathbb{Z}=P_2(S^1) \ar[ll]_{p_1=\text{\hspace{.3mm}Id}} \ar@{^{(}->}[r]^2 & B_2(S^1)=\mathbb{Z} \ar[rrr]^{\hspace{6mm}\theta_2=\text{\hspace{.3mm}mod-2 proj}} & & & \mathbb{Z}_2.
}\end{equation}
Consequently, morphisms $\psi\colon\pi_1(\Gamma/\tau)\to B_2(G)$ satisfying $\theta_2\circ\psi=\theta_1$ are in one-to-one correspondence with tuples of integer numbers $(p,q_1,\ldots,q_m)$, with $p$ odd and each $q_i$ even, where the correspondence is so that  $\psi(c)=p$ and $\psi(c_i)=q_i$ for $1\leq i\leq m$. Say $q_i=2p_i$. Lemma~\ref{generatorsviaranks} and~(\ref{babyexample}) then imply that the restriction to $\pi_1(\Gamma)$ of such a $\psi$ is given by $\varphi(c)=p$ and $\varphi(a_i)=p_i=\varphi(a'_i)$ for $i=1,2,\ldots,m$. The result follows.
\end{proof}

The proof of Theorem~\ref{maintheorem} follows the strategy in the previous proof, except that the needed algebraic manipulations are far much subtler, and depend on the results in Subsetion~\ref{secciongbg}. With this in mind, we assume from this point on that $G$ is not homeomorphic to a circle (or to a tree, in view of Example~\ref{casodearboles}), and pick key elements $\rho,\lambda_1,\lambda_2,\ldots,\lambda_k\in P_2(G)$ and $\sigma\in B_2(G)$ as described below. We then set $\lambda'_i:=\sigma\lambda_i\hspace{.3mm}\sigma^{-1}\in P_2(G)$.

\begin{figure}[h!]
\centering
\begin{tikzpicture}
\node[style={circle, draw, scale=.6}, scale=1.0, xscale = -1] (1) at ( 1.3, 3) {0};
\node[scale=.2] (2) at ( 3.06, 3) {};
\node[scale=.2] (3) at ( 4.53, 3) {};
\node[style={circle, draw, scale=.7}] (4) at ( 6.0, 3) {$v$};
\node[style={circle, draw, scale=.6}] (5) at ( 6.6, 2.4) {$v_2$};
\node[style={circle, draw, scale=.6}] (12) at ( 6.6, 3.6) {$v_1$};
\node[style={circle, draw, scale=.6}] (13) at ( 2.3, 3 ) {$1$};
\draw[very thin] (13) -- (1); \draw[very thin] (13) -- (2);
\draw[dashed, very thin] (3) -- (2);
\draw[very thin] (4) -- (3);
\draw[very thin] (5) -- (4);
\draw[very thin] (12) -- (4);
\end{tikzpicture}
\caption{Part of the non-linear tree $T$ showing the essential vertex $v$}
\label{ing}
\end{figure}
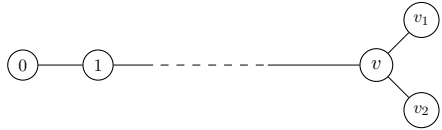

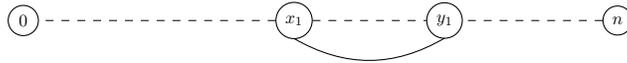
\begin{figure}[h!]
\centering
\begin{tikzpicture}
\node[style={circle, draw, scale=.6}, scale=1.0, xscale = -1] (1) at ( -1.3, 3) {0};
\node[scale=.2] (2) at ( 3.06, 3) {};
\node[scale=.2] (3) at ( 4.53, 3) {};
\node[style={circle, draw, scale=.6}] (4) at ( 4.3, 3) {$y_1$};
\node[style={circle, draw, scale=.6}] (5) at ( 6.6, 3) {$n$};
\node[style={circle, draw, scale=.6}] (13) at ( 2.3, 3 ) {$x_1$};
\draw[dashed,very thin] (13) -- (1);
\draw[dashed, very thin] (13) -- (4);
\draw[dashed, very thin] (5) -- (4);
\path[line width=.14mm] (2.3, 2.76) edge [bend right] (4.3,2.765);
\end{tikzpicture}
\caption{The linear tree $T$ together with the erased edge $(x_1,y_1)$}
\label{ong}
\end{figure}

\noindent  {\bf When $T$ has an essential vertex $v$}. Choose vertices $v_1,v_2$ with $v<v_1<v_2$ and $\tau(e_{v_1})=v=\tau(e_{v_2})$. See Figure~\ref{ing}. In this situation we choose the critical 1-cell connecting the trees $DT_d$ and $DT_u$ to be $(a,(b,c)):=(v_1,(v,v_2))$ (see Subsection~\ref{secciongbg}), and set
$$ 
\sigma:=\{v_1,(v,v_2)\},\quad\rho:=((v,v_2),v_1)\quad\mbox{and}\quad\lambda_i:=(x_i+1,(x_i,y_i)),
$$ 
for $1\leq i\leq k$ (note that $x_i+1<y_i$ since $G$ is simplicial). Corollary~\ref{conjugacion} and Propositions~\ref{inclusion} and~\ref{proyeccion1} then give
\begin{equation}\label{propertiesnolinear}
\mbox{$\rho=\sigma^2$, \ $p_1(\rho)=1$, \ $p_1(\lambda_i)=1$ \ and \ $p_1(\lambda'_i)=z_i$, \ for $i=1,2,\ldots,k$.}
\end{equation}
Here and below, we write $p_1$ instead of $(p_1)_{\#}$. The context clarifies the abuse of notation.

\noindent  {\bf When $T$ is linear}. In this situation we choose the critical 1-cell connecting the trees $DT_d$ and $DT_u$ to be $(a,(b,c)):=(x_1+1,(x_1,y_1))$. See Figure~\ref{ong}. Recall we have chosen $y_1<y_i$ for $i>1$. Furthermore, the condition $x_1+1<y_1$ is forced because $G$ is simplicial. Then set
$$ 
\sigma:=\{x_1+1,(x_1,y_1)\},\ \ \rho:=((x_1,y_1),x_1+1),\ \ \lambda_1:=(x'_1,(x_1,y_1))\ \ \mbox{and} \ \ \lambda_i:=(x_i+1,(x_i,y_i)),
$$ 
for $2\leq i\leq k$, where $$x'_1=\begin{cases}0,&\mbox{if $x_1>0$;}\\y_1+1,&\mbox{if $x_1=0$ (so $y_1<n$, even if $k=1$, since $G$ is not homeomorphic to a circle).}\end{cases}$$ Corollary~\ref{conjugacion} and Propositions~\ref{inclusion} and~\ref{proyeccion1} then give
\begin{equation}\label{propertieslinear}
\mbox{$\rho=\sigma^2$, \ $p_1(\rho)=z_1$, \ $p_1(\lambda_i)=1$, \ $p_1(\lambda'_1)=z_1$ \ and \ $p_1(\lambda'_i)=z_iz^{-1}_1$ \ for $i=2,3,\ldots,k$.}
\end{equation}

\begin{proof}[Proof of Theorem~\ref{maintheorem}]
Let $\lambda$, $\lambda'$ and $z$ denote, respectively, the sequences of symbols $(\lambda_1,\ldots,\lambda_k)$, $(\lambda'_1,\ldots,\lambda'_k)$ and $(z_1,\ldots,z_k)$, and consider an arbitrary sequence of words
\begin{equation}\label{word}
\left(\rule{0mm}{4mm}w(z),w_1(z),w'_1(z),\ldots,w_m(z),w'_m(z)\right)\in F(z_1,\ldots,z_k)^{2m+1}.
\end{equation}
We have to prove that the homotopy class $\alpha\in[\Gamma,G]$ corresponding to~(\ref{word}) under~(\ref{elts}) and Lemma~\ref{generatorsviaranks} fits in a commutative diagram~(\ref{technique}) for suitable morphisms $\psi$ and $\varphi$.

\medskip
Assume $T$ has an essential vertex $v$ and consider the setup in~(\ref{propertiesnolinear}). The map $\psi\colon\pi_1(\Gamma/\tau)\to B_2(G)$ defined by
$$
\psi(c)=w(\lambda)\sigma \mbox{ \ \ and \ \ } \psi(c_i)=\sigma w_i(\lambda)\sigma^{-1}w'_i(\lambda), \ \ i=1,\ldots,m,
$$
satisfies $\theta_1=\theta_2\circ\psi$, in view of Proposition~\ref{thetaclasificante} and Lemma~\ref{generatorsviaranks}. Furthermore, the restricted map $\varphi\colon\pi_1(\Gamma)\to P_2(G)$ satisfies
\begin{align*}
\varphi(a)&=\psi(c)^2=w(\lambda)\sigma w(\lambda)\sigma=w(\lambda)\cdot\sigma w(\lambda)\sigma^{-1}\cdot \rho,\\
\varphi(a_i)&=\psi(c_i)=\sigma w_i(\lambda)\sigma^{-1} \cdot w'_i(\lambda),\\
\varphi(a'_i)&=\psi(c) \psi(c_i) \psi(c)^{-1}=w(\lambda)\sigma\sigma w_i(\lambda)\sigma^{-1}w'_i(\lambda) \sigma^{-1} w(\lambda)^{-1}\\&=w(\lambda)\rho w_i(\lambda)\rho^{-1}\cdot \sigma w'_i(\lambda) \sigma^{-1} \cdot w(\lambda)^{-1}.
\end{align*}
So
\begin{align*}
p_1(\varphi(a))&=p_1(w(\lambda))\cdot p_1(\sigma w(\lambda)\sigma^{-1})\cdot p_1(\rho)=p_1(\sigma w(\lambda)\sigma^{-1})=p_1(w(\lambda'))=w(p_1(\lambda'))=w(z),\\
p_1(\varphi(a_i))&=p_1(\sigma w_i(\lambda)\sigma^{-1})\cdot p_1(w'_i(\lambda))=p_1(w_i(\lambda'))=w_i(p_1(\lambda'))=w_i(z),\\
p_1(\varphi(a'_i))&=p_1(w(\lambda)\rho w_i(\lambda)\rho^{-1})\cdot p_1(\sigma w'_i(\lambda) \sigma^{-1})\cdot p_1(w(\lambda))^{-1}=p_1(\sigma w'_i(\lambda) \sigma^{-1})=p_1(w'_i(\lambda'))=w'_i(z),
\end{align*}
which yields the result in the case under consideration.

\medskip
Assume now that $T$ is linear and consider the setup in~(\ref{propertieslinear}). Write the elements $w(z)z_1^{-1}$, $w_i(z)z_1^{-1}$ and $w'_i(z)z_1^{-1}$ of $F(z_1,z_2,\ldots,z_k)$ as words on the generators $t_1,t_2,\ldots,t_k$ of $F(z_1,z_2,\ldots,z_k)$ given by $t_1:=z_1$ and $t_i:=z_iz_1^{-1}$ for $i\geq2$. Say
$$w(z)z_1^{-1}=\ell(t),\quad w_i(z)z_1^{-1}=\ell_i(t),\quad w'_i(z)z_1^{-1}=\ell_i\hspace{-.9mm}'(t),$$
where $t$ stands for the tuple $(t_1,t_2,\ldots,t_k)$. The map $\psi\colon\pi_1(\Gamma/\tau)\to B_2(G)$ defined by
$$
\psi(c)=\ell(\lambda)\sigma \mbox{ \ \ and \ \ } \psi(c_i)=\sigma \ell_i(\lambda)\sigma \lambda_1^{-1} \ell_i\hspace{-.9mm}'(\lambda)\lambda_1, \ \ i=1,\ldots,m,
$$
satisfies $\theta_1=\theta_2\circ\psi$, in view of Proposition~\ref{thetaclasificante} and Lemma~\ref{generatorsviaranks}. Furthermore, the restricted map $\varphi\colon\pi_1(\Gamma)\to P_2(G)$ satisfies
\begin{align*}
\varphi(a)&=\psi(c)^2=\ell(\lambda)\sigma \ell(\lambda)\sigma=\ell(\lambda)\cdot\sigma \ell(\lambda)\sigma^{-1}\cdot\rho,\\
\varphi(a_i)&=\psi(c_i)=\sigma \ell_i(\lambda)\sigma\lambda_1^{-1} \ell_i\hspace{-.9mm}'(\lambda)\lambda_1
=\sigma \ell_i(\lambda)\sigma^{-1}\cdot\rho\lambda_1^{-1} \ell_i\hspace{-.9mm}'(\lambda)\lambda_1,\\
\varphi(a'_i)&=\psi(c) \psi(c_i) \psi(c)^{-1}=\ell(\lambda)\sigma\cdot \sigma \ell_i(\lambda)\sigma\lambda_1^{-1} \ell_i\hspace{-.9mm}'(\lambda) \lambda_1 \cdot \sigma^{-1} \ell(\lambda)^{-1}\\
&=\ell(\lambda)\rho \ell_i(\lambda)\cdot\sigma \lambda_1^{-1} \ell_i\hspace{-.9mm}'(\lambda) \lambda_1 \sigma^{-1} \cdot\ell(\lambda)^{-1}.
\end{align*}
So
\begin{align*}
p_1(\varphi(a))&=p_1(\ell(\lambda))\cdot p_1(\sigma \ell(\lambda)\sigma^{-1})\cdot p_1(\rho)=p_1(\ell(\lambda')) z_1=\ell(p_1(\lambda'))z_1=\ell(t)z_1=w(z),\\
p_1(\varphi(a_i))&=p_1(\sigma \ell_i(\lambda)\sigma^{-1})\cdot p_1(\rho\lambda_1^{-1} \ell_i\hspace{-.9mm}'(\lambda)\lambda_1)=p_1(\sigma \ell_i(\lambda)\sigma^{-1})z_1=p_1(\ell_i(\lambda'))z_1=\ell_i(t)z_1=w_i(z),\\
p_1(\varphi(a'_i))&=p_1(\ell(\lambda)\rho \ell_i(\lambda))\cdot p_1(\sigma\lambda_1^{-1} \ell_i\hspace{-.9mm}'(\lambda)\lambda_1 \sigma^{-1})\cdot p_1(\ell(\lambda))^{-1}=z_1p_1(\sigma\lambda_1^{-1} \ell_i\hspace{-.9mm}'(\lambda)\lambda_1 \sigma^{-1})\\&=z_1p_1((\lambda'_1)^{-1} 
\ell_i\hspace{-.9mm}'(\lambda')\lambda'_1)=z_1(p_1(\lambda'_1))^{-1}\cdot 
p_1(\ell_i\hspace{-.9mm}'(\lambda'))\cdot p_1(\lambda'_1)=z_1z_1^{-1}\ell_i\hspace{-.9mm}'(t)z_1=w'_i(z),
\end{align*}
which completes the proof. 
 \end{proof}
 

{\sc \ 

Departamento de Matem\'atica, IME

Universidade de S\~ao Paulo

Rua do Mat\~ao 1010 CEP: 05508-090

S\~ao Paulo-SP, Brazil

{\tt dlgoncal@ime.usp.br}

\bigskip

Departamento de Matem\'aticas

Centro de Investigaci\'on y de Estudios Avanzados del I.P.N.

Av.~Instituto Polit\'ecnico Nacional n\'umero~2508, San Pedro Zacatenco

M\'exico City 07000, M\'exico.}

{\tt jesus@math.cinvestav.mx}

\end{document}